\newtheorem{theorem}{Theorem}[section]
\newtheorem{lemma}{Lemma}[section]
\renewcommand{\L}{\mathsf{L}}
\begin{document}

\title[Eigenvalue bound for Schr\"{o}dinger operators]{Eigenvalue bound for Schr\"{o}dinger operators with unbounded magnetic field}

\author{Diana Barseghyan}
\address{Department of Mathematics, University of Ostrava, 30.~dubna 22, 70103 Ostrava, Czech Republic \& Department of Theoretical Physics, Nuclear Physics Institute, 25068 \v{R}e\v{z} near Prague, Czech Republic}
\email{diana.barseghyan@osu.cz}

\author{Baruch Schneider}
\address{Department of Mathematics, University of Ostrava, 30.~dubna 22, 70103 Ostrava, Czech Republic}
\email{baruch.schneider@osu.cz} 
 
\keywords{Eigenvalue bounds, radial
magnetic field, Lieb-Thirring inequalities, discrete spectrum, eigenvalue counting function}

\begin{abstract}
In this paper we consider  magnetic Schr\"{o}dinger operators  on the two-dimensional unit disk with a radially symmetric magnetic field which explodes to infinity at the boundary. We prove a bound for the eigenvalue moments and a bound for the number of negative eigenvalues for such operators.
\end{abstract}

\maketitle

\section{Introduction\label{sec1}} 
 
\subsection{}
The current paper deals with eigenvalue bounds for \emph{magnetic} Schr\"{o}dinger operators. However at first we recall some classical results for non-magnetic case.
Let $V(x)$ be a bounded measurable real-valued function on an open  set 
$\Omega\subset \mathbb{R}^d$, $d\ge 1$. We consider the Schr\"{o}dinger operator
\begin{equation}\label{non-magn.}
H_\Omega(0,V)=-\Delta_D^\Omega -V
\end{equation}
acting in $\L^2(\Omega)$ subject to the Dirichlet conditions on the boundary of $\Omega$; $0$ is the notation $H_\Omega(0,V)$ reflects the fact that there are no magnetic potential.  
Denote by $\{\lambda_j(\Omega, 0,V)\}_{j=1}^{N}$ the eigenvalues of $H_\Omega(0,V)$ located below the bottom of the essential spectrum of $H_\Omega(0,V)$. As usual we renumber the eigenvalues in the non-decreasing order and  repeat them according to their multiplicity. 
If $\Omega$ is bounded, the spectrum of $H_\Omega(0,V)$ is purely discrete, $N=\infty$, and the eigenvalues $\lambda_j(\Omega,0, V)$ accumulates at infinity. 
The main object of our studies
are  the so-called \emph{Riesz means} given by
\begin{gather}
\label{tr:def}
\mathrm{tr}(H_\Omega(0,V))_-^\sigma=\sum_{\lambda_j(\Omega,0, V)\le0} |\lambda_j(\Omega, V)|^\sigma,\quad\sigma\ge0.
\end{gather}
Here and in what follows the notation $f_\pm=(|f|\pm f )/2$ stays for the positive and negative parts of a number, a function or an operator. Note that for $\sigma=0$  the quantity in \eqref{tr:def} is  the number of non-positive eigenvalues of $H_\Omega(0,V)$.

The first classical result in this area concerns the behaviour of $\mathrm{tr}(H_\Omega(0,V))_-^\sigma$
in the strong coupling limit. Namely,
introducing a scaling parameter $\lambda>0$ and replacing the potential $V$ by $\lambda V$ one gets the  asymptotic formula   
\begin{equation}\label{Weyl}
\lim_{\lambda\to\infty}\lambda^{-\sigma-d/2}
\mathrm{tr}(H_\Omega(0,\lambda V))_-^\sigma = L_{\sigma, d}^{\mathrm{cl}}  \int_\Omega V_+(z)^{\sigma+d/2}dz,\ \sigma\ge 0
\end{equation}
with the semiclassical constant 
\begin{equation}\label{const}
L_{\sigma,d}^{\mathrm{cl}}=\frac{\Gamma(\sigma+1)}{(4\pi)^{
\frac{d}{2}}\Gamma
(\sigma+1+d/2)}.
\end{equation}
We assumed above that $V\in \L^{\sigma+d/2}(\Omega)$.
For  $\sigma=0,\, V\equiv {\rm const}$ this result goes back to H.~Weyl \cite{W12}, therefore \eqref{Weyl} is usually refered to as \emph{Weyl's law}.

The second classical result -- \emph{Lieb-Thirring inequality} -- was established by E.H.~Lieb and W.~Thirring in 
\cite{LT76}.
It states that the right-hand-side in \eqref{Weyl}  is not only the limit of the left-hand-side, but also an upper bound (up to a multiplicative constant).
Namely, for $\sigma >\mathrm{max}\{0,1-d/2\}$ and $V\in \L^{\sigma+d/2}(\Omega)$   the estimate
\begin{equation}
\label{Lieb:Thirring}
\mathrm{tr}(H_{\Omega}(0,V))_-^\sigma\le L_{\sigma,d} \int_{\Omega} V_+(z)^{\sigma+d/2}dz
\end{equation}
holds with certain positive constant $L_{\sigma,d}$. 
In fact, the above result was established in
\cite{LT76} for $\Omega=\mathbb{R}^d$, and then for an arbitrary domain $\Omega$
it holds immediately due to the inequality  
\begin{gather}
\label{tr:tr}
\mathrm{tr}(H_{\Omega}(0,V))_-^\sigma\le
\mathrm{tr}(H_{\mathbb{R}^d}(0,\widehat V))_-^\sigma,
\end{gather}
where $\widehat V$ is the extension of $V$ by zero to $\mathbb{R}^d\setminus\Omega$;  \eqref{tr:tr} follows easily from the min-max principle (see, e.g., \cite{RS78}). 

Note that estimate \eqref{Lieb:Thirring} remains valid  for
$\sigma=0,\,d\ge 3$. This result 
was established independently by  M.~Cwikel \cite{Cw77}, E.H.~Lieb \cite{Li80}, and G.V.~Rozenblyum \cite{Ro72,Ro76}. T.~Weidl \cite{Wei96} proved that \eqref{Lieb:Thirring} also holds for $d=1,\,\sigma={1/2}$.
However for $d=2$ and $\sigma=0$ \eqref{Lieb:Thirring} does not hold.
In this case one has the following estimate  established by K.~Chadan, N.N.~Khuri, A.~Martin, and T. T.~Wu in \cite{CKMW03} under the assumption that the potential \emph{$V$ is  radially symmetric}:
\begin{equation}\label{Chadan}
\mathrm{tr}(H_{\Omega}(0,V))_-^0\le 1 +\int_{\mathbb{R}^2} V_+(z) (1 +|\ln |z||)\,d z\,.
\end{equation}

\subsection{}
Despite the rigorous study of Schr\"{o}dinger operators \eqref{non-magn.}, there has been much less investigation of Schr\"{o}dinger operators with magnetic fields, which are in focus of the present paper. Let $\Omega$ be a open set in $\mathbb{R}^2$; in what follows the points in $\Omega$ will be denoted by $z$, its Cartesian coordinates will be denoted by $(x,y)$. Let
$$A=(A_1,A_2):\Omega\to \mathbb{R}^2\text{ (magnetic potential) 
},\quad V:\Omega\to \mathbb{R}\text{ (electric potential)}.$$
As above $V$ is assumed to be bounded and measurable. 
The two-dimensional 
magnetic Schr\"odinger operator is (formally) defined by
\begin{equation}
\label{operator}
H_\Omega(A, V) = (i\nabla+A )^2-V.
\end{equation} 
On $\partial\Omega$ we again prescribe the Dirichlet boundary conditions. 
The magnetic field $B$ is given by $$B=\mathrm{rot}\, A={\partial A_2\over \partial x}-{\partial A_1\over \partial y}.$$  
Again we denote the eigenvalues $H_\Omega(A, V)$ lying below the bottom of the essential spectrum by $\{\lambda_j(\Omega, A, V)\}_{j=1}^N$  renumbering them in the increasing order and with account of their multiplicities. 
Note, that if $\Omega$ is bounded and the vector potential $A$ satisfies mild regularity conditions, the magnetic Sobolev norm 
$$\|(i \nabla+A)u)\|_{\L^2(\Omega)}^2,\,\,u\in \mathcal{H}_0^1(\Omega)$$
is equivalent to the non-magnetic one, whence one can easily deduce  the discreteness of the spectrum of  $H_\Omega(A, V)$, i.e.  in this case one has $N=\infty$, and eigenvalues $\lambda_j(\Omega, A, V)$ accumulates to infinity. 

A.~Laptev and T.~Weidl \cite{LW00} proved that 
\begin{gather}
\label{LW}
\mathrm{tr}(H_{\mathbb{R}^d}(A, V))_-^\sigma\le L_{\sigma, d}^{\mathrm{cl}} \int_{\mathbb{R}^d} V_+(z)^{\sigma+d/2} dz,\quad \sigma\ge 3/2
\end{gather}
provided 
$A\in \L^2_{\mathrm{loc}}(\mathbb{R}^d)$ and $V\in \L^{\sigma+d/2}(\mathbb{R}^d)$.
By the minimax principle  estimate \eqref{LW} also
holds with an arbitrary bounded domain $\Omega $ instead of $\mathbb{R}^d$,
provided $A\in \L^2(\Omega)$ and $V\in \L^{\sigma+d/2}(\Omega)$.

\subsection{}
One of the models attracting considerable attention in the last ten years
concerns  magnetic Schr\"odinger operators on bounded domains $\Omega$
with magnetic fields satisfying
\begin{gather}
\label{grow}
B(z)\to\infty\text{ as }z\to\partial\Omega.
\end{gather}
Apparently, for the first time such model was treated in
\cite{CdVT10}, where the authors established the essential 
self-adjointness of $H_{\Omega}(A, V)$ (defined on $C^\infty_0(\Omega)$) 
under certain assumptions on the growth of $B$ near the boundary of $\Omega$.
The obtained results are of some technical interest due to their connection with to a special kind of magnetic confinement devices -- \emph{tokamacs}. 

In the current paper for the above model
we derive Lieb-Thirring-type inequality (Theorem~\ref{th1}) and  also
the estimate for the number of negative eigenvalues (Theorem~\ref{th2}) under the following 
restrictions: $\Omega$ is a unit disc, the magnetic field is radially symmetric with respect to the center of this disc, and also some additional conditions of the growth of $B$ near the boundary of $\Omega$ take place (cf.~\eqref{assumption2}).
It is important that magnetic potentials we deal with are not necessary
in $\L^2(\Omega)$ and therefore we are not able to apply the results of \cite{LW00}.

We formulate the main results in Section~\ref{sec2}.
Note, that some other eigenvalue bounds for operators \eqref{operator} with magnetic fields satisfying \eqref{grow} were also derived in \cite{BEKW16,BT19,T12} 
under more restrictive assumptions on $A$ and $B$.
At the end of Section~\ref{sec2} we compare the estimates obtained in these works  with  the estimates presented in the present paper. 
Their proof are given in Section~\ref{sec3}. 

\section{Results\label{sec2}}

Let $\Omega$ be the two-dimensional unit disk centered at the origin.
We denote the points in $\Omega$ by $z=(r,\theta)$, where 
$(r,\theta)$ are polar coordinates (with respect to the center of $\Omega$).  

We are given the bounded measurable function $V:\Omega\to\mathbb{R}$ (electric potential)
and the radially symmetric function $B:\Omega\to \mathbb{R}$ (magnetic field) satisfying
\begin{gather}
\label{assumption1}
\inf_{z\in\Omega}\,B(z)>0,\\
\label{assumption2}
B(z)=\frac{M}{(1-|z|)^\alpha}+g(|z|) 
\end{gather}
with some $\alpha\in (0,2]$, $M>0$, and a bounded measurable 
function $g:[0,\infty)\to\mathbb{R}$.
It is easy to see that,
up to a gauge transformation, the corresponding magnetic potential $A=(A_1,A_2)$
is given by
$$A_1(z)=-\sin\theta\cdot \int_0^r s B(s) ds ,\quad A_2=\cos\theta\cdot\int_0^r s B(s) ds,\quad z=(r,\theta).$$
Note that $A$ does not belong to $\L^2(\Omega)$ as $\alpha\ge3/2$.

We define the operator $H_\Omega (A, V)$ acting in $\L^2(\Omega)$ by differential operation 
\eqref{operator}, first on the smooth and compactly supported in $\Omega$ functions.
In view of \eqref{assumption1} and the well-known lower bound (see, e.g., \cite{AHS78}) 
$$(H_\Omega (A, V) (u), u)_{\L^2(\Omega)}\ge \int_\Omega \left(B(z)-\|V\|_{\L^\infty(\Omega)}\right) |u(z)|^2\,dz$$ one can construct the Friedrichs extension of $H_\Omega (A, V)$. 
For simplicity, we will use for this Friedrichs extension the same symbol  $H_\Omega(A, V)$. 

One can show that $H_\Omega(A, V)$ has a purely discrete spectrum. In fact, this fact will be established within the proof of Theorem~\ref{th1}.  We denote the increasingly ordered sequence of  the eigenvalues of $H_\Omega(A, V)$  by $\lambda_k =\lambda_k(\Omega, A, V)$, $k=1,2,3\dots$\,.\smallskip

Further we will also need the function $\widetilde V:\mathbb{R}_+\to\mathbb{R}$ given by
\begin{equation}\label{tildeV}
\widetilde{V}(r):=\underset{\theta\in[0,2\pi)}{\mathrm{ess\,sup}}V_+(r,\theta).
\end{equation}
Now we are in position to give the main results of this work.

\begin{theorem} 
\label{th1} 
For any $\sigma> 0$ the inequality
\begin{equation}
\label{main:est:1}
\mathrm{tr}\left(H_\Omega(A, V)\right)_-^\sigma\le C \int_0^1 \widetilde{V}^{\sigma+1}(r) r\,dr
\end{equation}
holds
with some positive constant $C=C(B,\sigma)$ depending on $B$ and on $\sigma$.
\end{theorem}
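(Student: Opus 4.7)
The plan is to exploit the radial symmetry of $B$ (which $V$ need not share) via a fibred decomposition into angular modes, estimate each mode by a one-dimensional Lieb--Thirring-type bound, and sum. First I would reduce to radial $V$: since $V(z)\le \widetilde V(|z|)$ almost everywhere, $H_\Omega(A,V)\ge H_\Omega(A,\widetilde V)$ in the quadratic-form sense, and the min-max principle gives
\[
\mathrm{tr}(H_\Omega(A,V))_-^\sigma \le \mathrm{tr}(H_\Omega(A,\widetilde V))_-^\sigma.
\]
Next, with the paper's gauge $A=(\Phi(r)/r)\hat\theta$ for $\Phi(r)=\int_0^r sB(s)\,ds$, the operator $H_\Omega(A,\widetilde V)$ commutes with rotations, so Fourier expansion in $\theta$ yields the orthogonal decomposition
\[
H_\Omega(A,\widetilde V) = \bigoplus_{m\in\mathbb Z} h_m,\qquad h_m = -\frac{1}{r}\partial_r r\partial_r + \frac{(m+\Phi(r))^2}{r^2}-\widetilde V(r),
\]
each $h_m$ acting on $\L^2((0,1),r\,dr)$ with Dirichlet boundary condition at $r=1$. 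The assumption $\inf B>0$ together with the barrier $(m+\Phi)^2/r^2$ makes $h_m\ge 0$ for all but finitely many $m$, incidentally giving the discreteness of the spectrum claimed in the setup, and $\mathrm{tr}(H_\Omega(A,\widetilde V))_-^\sigma = \sum_m \mathrm{tr}(h_m)_-^\sigma$.

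For each $m$ I would treat $h_m$ as a one-dimensional Schr\"odinger operator. Its effective potential $U_m(r)=(m+\Phi(r))^2/r^2-\widetilde V(r)$ is a localised well of depth at most $\widetilde V(r)$, supported where $|m+\Phi(r)|\le r\sqrt{\widetilde V(r)}$, and confined elsewhere by the barrier. Applying a 1D Lieb--Thirring-type inequality --- Weidl's bound in the range $\sigma\ge 1/2$, and for $0<\sigma<1/2$ a harmonic-oscillator comparison at the unique root $r_m$ of $m+\Phi$, near which $U_m(r)\approx B(r_m)^2(r-r_m)^2-\widetilde V(r_m)$ --- I expect the estimate
\[
\mathrm{tr}(h_m)_-^\sigma \le C_\sigma \int_0^1 \bigl(\widetilde V(r)-(m+\Phi(r))^2/r^2\bigr)_+^{\sigma+1/2}\,dr.
\]
The attractive $-1/(4r^2)$ produced by the unitary $u(r)\mapsto u(r)\sqrt r$ (used to remove the weight $r\,dr$) is absorbed by Hardy's inequality applied to the kinetic term.

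Finally I would sum in $m$. Exchanging $\sum_m$ and $\int_0^1$ and recognising the inner sum as a Riemann sum in the variable $y=m+\Phi(r)\in\mathbb R$, for each fixed $r$
\[
\sum_{m\in\mathbb Z}\bigl(\widetilde V(r)-(m+\Phi(r))^2/r^2\bigr)_+^{\sigma+1/2} \le \int_{\mathbb R}\bigl(\widetilde V(r)-y^2/r^2\bigr)_+^{\sigma+1/2}\,dy + O\bigl(\widetilde V(r)^{\sigma+1/2}\bigr),
\]
and the integral on the right equals $c_\sigma\,r\,\widetilde V(r)^{\sigma+1}$ by the change of variable $y=r\sqrt{\widetilde V(r)}\,u$. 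Integrating in $r$ and controlling the Euler--Maclaurin remainder against the main term (treating separately the regimes $r\sqrt{\widetilde V(r)}\ge 1$ and $r\sqrt{\widetilde V(r)}<1$, the latter being a few-modes regime handled by a Bargmann-type count) yields the claimed bound $\mathrm{tr}(H_\Omega(A,V))_-^\sigma \le C(\sigma)\int_0^1 \widetilde V(r)^{\sigma+1}\,r\,dr$.

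The principal technical obstacle is the per-mode estimate for $0<\sigma<1/2$, since the classical 1D Lieb--Thirring inequality requires $\sigma\ge 1/2$. The point that makes this tractable is the strong magnetic confinement: near $r_m$ each $h_m$ is a perturbed magnetic harmonic oscillator of frequency $\sim B(r_m)$ whose negative-eigenvalue moments are explicit; crucially, the density of these Landau-type centres at radius $r$ is $rB(r)$, and multiplied by the per-mode contribution $\sim \widetilde V(r)^{\sigma+1}/B(r)$ this produces the correct phase-space weight $r\widetilde V(r)^{\sigma+1}$ in the final bound, the factors of $B$ cancelling identically.
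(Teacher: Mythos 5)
Your opening reduction ($V\le\widetilde V$ plus min--max) and the decomposition into angular momentum channels coincide with the paper's first steps, but from there you diverge: you propose to bound each fibre $h_m$ by a one-dimensional Lieb--Thirring-type inequality and then sum over $m$, whereas the paper proves the single operator inequality $h_m(B,\widetilde V)\ge\gamma\, h_m(0,\widetilde V/\gamma)$ for every $m$ with one constant $\gamma=\gamma(B)$ (via a case analysis on where the mass of a normalized trial function sits relative to the set where $\Phi$ is comparable to $m$, using the key estimate $\Phi'\ge\widetilde C\Phi^2$ of Lemma~\ref{a}), reassembles the fibres into $H_\Omega(A,\widetilde V)\ge\gamma H_\Omega(0,\widetilde V/\gamma)$, and then invokes the known two-dimensional bound \eqref{Lieb:Thirring} as a black box. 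The paper's route never needs a per-channel spectral estimate nor a summation over channels, and that is exactly where your argument has gaps.

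Three of these gaps are genuine. First, for $0<\sigma<1/2$ the classical one-dimensional Lieb--Thirring inequality fails ($\sigma=1/2$ is the endpoint case), and your substitute --- a harmonic-oscillator comparison near the zero $r_m$ of $m+\Phi$ --- is only a heuristic: the well also carries the arbitrary bounded potential $-\widetilde V$, which need not be a small or controlled perturbation of the oscillator, and no actual moment bound is supplied in this range. Second, the claim that the $-1/(4r^2)$ produced by removing the weight $r\,dr$ is ``absorbed by Hardy's inequality'' cannot work as stated: the Hardy constant $1/4$ is sharp, so absorbing that term consumes the entire kinetic energy and leaves nothing with which to run a Lieb--Thirring bound in the channels where $|m+\Phi|<1/2$ (in particular $m=0$ near the origin, the classical obstruction behind the failure of the $\sigma=0$ bound in two dimensions). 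Third, your Euler--Maclaurin remainder $O(\widetilde V(r)^{\sigma+1/2})$ per radius is not dominated by the target quantity: $\int_0^1\widetilde V^{\sigma+1/2}\,dr$ is in general much larger than $\int_0^1\widetilde V^{\sigma+1}r\,dr$ (compare the exponents for small $\widetilde V$, and note the weight $r$ near the origin), so the final inequality does not follow from your displayed estimates; one would have to show that the offending ``few-mode'' channels carry no negative spectrum at all, which you only gesture at. None of this makes the strategy hopeless --- it is essentially how \cite{CKMW03} treat $\sigma=0$, at the price of the additive $+1$ in \eqref{Chadan} --- but each point requires substantive work that the proposal does not contain, and the paper's comparison with the non-magnetic operator sidesteps all three.
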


For the radially symmetric potential $V$ our estimate \eqref{main:est:1} coincides with the standard Lieb-Thirring inequality up to a constant depending on the magnetic field.

\begin{theorem}
\label{th2}
The estimate
\begin{gather}
\label{main:est:2}
\mathrm{tr}\left(H_\Omega(A, V)\right)_-^0\le 1+C_1(B)\int_0^1 \widetilde{V}_+(r)(1+|\ln r|)r \,dr
\end{gather}
holds
with some positive constant $C_1=C_1(B )$ depending on $B$.
\end{theorem}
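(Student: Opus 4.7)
The plan is to adapt the strategy of the non-magnetic Chadan--Khuri--Martin--Wu bound~\eqref{Chadan} to the radial magnetic setting. First I would pass to a radial potential: since $-V(r,\theta) \ge -\widetilde V(r)$ as multiplication operators, the minimax principle gives $H_\Omega(A,V) \ge H_\Omega(A, \widetilde V)$, so $\mathrm{tr}(H_\Omega(A, V))_-^0 \le \mathrm{tr}(H_\Omega(A, \widetilde V))_-^0$ and I may assume $V = \widetilde V(r)$. Setting $\Phi(r) := \int_0^r s B(s)\,ds$ and working in the radial gauge, in which $(i\nabla + A)^2 = -r^{-1}\partial_r(r\partial_r) + r^{-2}(i\partial_\theta + \Phi(r))^2$, the Fourier decomposition $u(r,\theta) = \sum_m u_m(r) e^{im\theta}/\sqrt{2\pi}$ gives $H_\Omega(A,\widetilde V) = \bigoplus_{m\in\mathbb Z} h_m$ with $h_m = -r^{-1}\partial_r(r\partial_r) + (m - \Phi(r))^2/r^2 - \widetilde V(r)$ acting on $\L^2((0,1), r\,dr)$ with Dirichlet BC at $r=1$. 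The unitary $u \mapsto r^{-1/2}u$ converts $h_m$ into the flat one-dimensional operator $\tilde h_m = -d^2/dr^2 + ((m - \Phi(r))^2 - 1/4)/r^2 - \widetilde V(r)$ on $\L^2((0,1), dr)$ with Dirichlet BC at both endpoints, so $\mathrm{tr}(H_\Omega(A,\widetilde V))_-^0 = \sum_m N_-(\tilde h_m)$.

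Next I would bound each $N_-(\tilde h_m)$ by a one-dimensional Bargmann/Birman--Schwinger argument. Let $m_0 \in \mathbb Z$ be chosen so as to minimize $\inf_r (m - \Phi(r))^2$. For this distinguished mode the effective centrifugal-magnetic potential can descend to $-1/(4r^2)$ (the Hardy-critical value) at the zero of $m_0 - \Phi$, and a one-dimensional Bargmann-type estimate is expected to give $N_-(\tilde h_{m_0}) \le 1 + C(B) \int_0^1 \widetilde V_+(r)(1 + |\ln r|)\,r\,dr$, the `$+1$' and the logarithmic weight having exactly the provenance of the corresponding terms in~\eqref{Chadan} for the $m=0$ sector. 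For every $m\neq m_0$ the free one-dimensional operator $L_m = -d^2/dr^2 + ((m-\Phi)^2 - 1/4)/r^2$ with Dirichlet BC is strictly positive, and the Birman--Schwinger principle yields $N_-(\tilde h_m) \le \int_0^1 G_m(r,r)\,\widetilde V_+(r)\,dr$, where $G_m(r,s) = \phi_m^-(r\wedge s)\phi_m^+(r\vee s)/\mathcal W_m$ is the Dirichlet Green's function built from the homogeneous solutions $\phi_m^\pm$ vanishing at $r=0$ and $r=1$ respectively. Sturm-comparison with the explicit Bessel-type operator near $r=0$ forces $\phi_m^-(r) \le C\, r^{|m|+1/2}$, and an analogous comparison near $r=1$ exploiting the growth of $\Phi$ from~\eqref{assumption2} controls $\phi_m^+$; summation should produce $\sum_{m\neq m_0} G_m(r,r) \le C(B)\,r$ and hence contribute at most $C(B)\int_0^1 \widetilde V_+(r)\,r\,dr$ to the eigenvalue count.

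The main obstacle will be the uniform control of $G_m(r,r)$ near the turning points of $m - \Phi(r) = 0$, where WKB asymptotics break down. The crucial structural input from~\eqref{assumption2} is that $\Phi'(r) = rB(r) \to \infty$ as $r\to 1$ when $\alpha\ge 1$: this confines the turning point of each large-$|m|$ mode to a thin boundary layer near $r=1$, so that at any fixed interior $r$ only finitely many modes have nearby turning points and can be handled by Airy-type estimates, while the remaining modes decay exponentially and their contributions sum. The sub-case $\alpha\in(0,1)$, in which $\Phi$ remains bounded on $[0,1)$, is easier: only $|m|\le C(B)$ can have turning points at all, and every other mode enjoys a uniformly positive effective potential.
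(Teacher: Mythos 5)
Your opening reductions (passing to $\widetilde V$, the angular-momentum decomposition into $h_m$) match the paper, but from there you take a genuinely different route --- a per-mode Birman--Schwinger/Bargmann analysis with Green's function estimates --- and as it stands it has a real gap at its central step. The premise that there is one distinguished mode $m_0$ minimizing $\inf_r(m-\Phi(r))^2$, with all other free operators $L_m$ ``strictly positive'' and uniformly controllable, fails precisely in the regime that makes this problem hard: for $\alpha\ge 1$ in \eqref{assumption2} one has $\Phi(1)=+\infty$, so \emph{every} $m\ge 1$ has a turning point $\Phi(r)=m$ at which the effective potential $((m-\Phi)^2-\tfrac14)/r^2$ descends to the Hardy-critical value $-1/(4r^2)$. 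Thus infinitely many modes are in the ``critical'' situation you reserve for $m_0$ alone, the Birman--Schwinger bound $N_-(\tilde h_m)\le\int_0^1 G_m(r,r)\widetilde V_+(r)\,dr$ degenerates for each of them, and the claimed summability $\sum_{m\ne m_0}G_m(r,r)\le C(B)\,r$ is unsubstantiated --- note that the analogous sum already diverges logarithmically in $m$ for the non-magnetic disk (which is exactly why \eqref{Chadan} carries the weight $1+|\ln|z||$ and is not proved by naive mode-by-mode Green's function summation). Whether the boundary growth of $\Phi$ restores summability of $G_m(r,r)$ uniformly near the accumulating turning points is the entire technical content of the problem, and your sketch defers it to unspecified Sturm/Airy comparisons.

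The paper's proof avoids all of this by a single, more elementary mechanism: it proves the operator inequality $h_m(B,\widetilde V)\ge\gamma\,h_m(0,\widetilde V/\gamma)$ for every $m$ with $\gamma=\gamma(B)>0$, i.e.\ it recovers a fixed fraction of the \emph{full} centrifugal term $m^2/r^2$ despite the cancellation at the turning point. The key input is Lemma~\ref{a}, $\Phi'(r)\ge\widetilde C\,\Phi^2(r)$ (this is where $\alpha\le 2$ enters), which forces the turning region $\{r:\ m/2\le\Phi(r)\le 2m\}$ to have width $O(1/m)$; one then argues by cases on where the $\L^2$ mass of a normalized trial function sits, extracting the missing $m^2$ either from the centrifugal term outside the turning region or from the kinetic term via the fundamental theorem of calculus across the thin turning layer. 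Once $H_\Omega(A,\widetilde V)\ge\gamma H_\Omega(0,\widetilde V/\gamma)$ is in hand, Theorem~\ref{th2} is immediate by applying the Chadan--Khuri--Martin--Wu bound \eqref{Chadan} to the non-magnetic operator with the radially symmetric potential $\widetilde V/\gamma$ --- no new Bargmann-type analysis is needed, and the $1+|\ln r|$ weight and the additive $1$ are simply inherited from \eqref{Chadan}. If you want to salvage your approach, you would need to prove a uniform-in-$m$ substitute for the positivity of $L_m$ near the turning points; the paper's form inequality is, in effect, exactly that substitute.
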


For the radially symmetric potential $V$ our estimate \eqref{main:est:2} coincides with \eqref{Chadan} up to a constant depending on the magnetic field.

\subsection*{Discussion}
Estimates for Riesz means $\mathrm{tr}\left(H_\Omega(A, V)\right)_-^\sigma$  as $\sigma>0$ and 
 the magnetic field satisfies \eqref{grow} 
have been obtained in \cite{BEKW16,BT19}
under stronger restriction $B$ and $A$ comparing with those we treat in the present paper.
Namely, in \cite{BEKW16} the authors assumed that the total magnetic flux 
$\int_\Omega B(z)\,dz$ is less than $\pi$ (for $B$ satisfying \eqref{assumption2} this does not hold for $\alpha\ge 1$); 
in \cite{BT19} the right-hand side of the estimate explodes to infinity if $A\not\in \L^2(\Omega)$. 

As regard to  the number of negative eigenvalues,
we refer at first to the paper \cite{K11}, where the author treated  
 magnetic Schr\"odinger operators on $\mathbb{R}^2$
and for a large class of magnetic potential obtained 
the estimates resembling \eqref{Chadan} (as in our Theorem~\ref{th2}, with constants depending on the magnetic field),
again under the assumption that
magnetic potentials are in $\L^2_{\rm loc}(\mathbb{R}^d)$ -- thus we cannot use these results for all magnetic fields we treat in the present paper 
(recall that in the present paper the magnetic potential does not belong  to $\L^2(\Omega)$ if in the assumption \eqref{assumption2} $\alpha$ is larger or equal then $3/2$). 

If $\Omega$ is a disc and the magnetic field is radially symmetric and satisfies \eqref{grow}, some estimates (rather different from \eqref{main:est:2}) for  the number of negative eigenvalues
were obtained in \cite{T12} under additional assumption
$$B(z)\leq \frac{M}{(1-|z|)^\alpha},\ \alpha\in (0,3/2),\ M>0.$$
In this case the underlying magnetic potential is again square integrable on $\Omega$.

\section{Proof of Theorems~\ref{th1}-\ref{th2}\label{sec3}}

In what follows, we use the same notation $\widetilde V$ 
for the function of $r\in [0,\infty)$ defined by \eqref{tildeV} as well 
for the radially symmetric function of $z=(r,\theta)\in\Omega$, whose values
at $z$ with $|z|=r$ are defined  by \eqref{tildeV}. That is, $\widetilde V(r)=\widetilde V(z)$ as $z=(r,\theta)\in\Omega$.
\smallskip

At first we observe that
$
H_\Omega (A, V) \leq H_\Omega (A, \widetilde{V})
$. Then  by the minimax principle 
$$\mathrm{tr}\left(H_\Omega(A, V)\right)_-^\sigma\le\mathrm{tr}(H_\Omega(A, \widetilde{V}))_-^\sigma,\quad \sigma\ge0.
$$
Hence  it is sufficient to prove estimate (\ref{main:est:1}) with $H_\Omega(A, \widetilde{V})$ being replaced by $H_\Omega(A, V)$ in its left-hand-side.
\smallskip

Recall, that our magnetic potential $A(r,\theta)$ is given by
$$
A(r,\theta) = 
\left(-\frac{\Phi(r)}{r} \sin\theta, \frac{\Phi(r)}{r} \cos\theta\right),\ \text{where }
\Phi(r) := \int_0^r s B(s)\,ds.
$$

We denote by $h_m(B, \widetilde{V})$  the Friedrichs extensions of the operator being associated with the quadratic form $Q(h_m(B, \widetilde{V}))$ in $\L^2((0, 1), 2\pi r dr)$,
$$Q(h_m(B, \widetilde{V}))[v]= 2\pi \int_0^1  \left(|v'(r)|^2+\frac{(m-\Phi(r))^2}{r^2} |v(r)|^2-\widetilde{V}(r) |v(r)|^2\right)\,r\,dr,\ v\in C_0^\infty(0,1).$$ 
The action of this operator is given by
$$
h_m(B, \widetilde{V})=-\frac{\mathrm{d}^2}{\mathrm{d}r^2}-\frac{1}{r}\frac{\mathrm{d}}{\mathrm{d} r}+\frac{(m-\Phi(r))^2}{r^2}-\widetilde{V}(r).$$
The radial symmetry of our potentials implies (see~\cite{E96}) 
the direct sum decomposition
$$H_\Omega(A, \widetilde{V})=\bigoplus_{m=-\infty}^\infty h_m(B, \widetilde{V})$$ 
with respect to
the space decomposition
\begin{gather*}
\L^2(\Omega,  d x )=
\bigoplus_{m=-\infty}^\infty \L^2((0, 1), 2\pi r dr),\\ 
f \rightarrow (...,f_{-1},f_0,f_1,....)   \text{\quad with\quad }  f(r,\theta ) = \sum_{m=-\infty}^\infty e^{im\theta} f_m (r).
\end{gather*} 

Our strategy is to prove that  
\begin{gather*}
h_m(B, \widetilde{V})\ge \gamma h_m(0, \widetilde{V}/\gamma),
\end{gather*}
with some constant $\gamma>0$, and then to employ the standard 
Lieb-Thirring bound for the \emph{non-magnetic} Schr\"odinger operator
\begin{gather}
\label{nonmagnetic:dec}
H_\Omega(0,\widetilde V/\gamma)=\bigoplus_{m=-\infty}^\infty h_m(0, \widetilde{V}/\gamma).
\end{gather}

Due to \eqref{assumption1}
 $\Phi(r)\ge 0$, whence
we immediately conclude that
\begin{align}
\label{negative} 
h_m(B, \widetilde{V})\ge \gamma_0\left(-\frac{\mathrm{d}^2}{\mathrm{d}r^2}-\frac{1}{r}\frac{\mathrm{d}}{\mathrm{d}r}+\frac{m^2}{r^2}-\widetilde V/\gamma_0\right)&\text{ as }m\le 0,
\end{align}
where $\gamma_0:=1$.
\smallskip

Now, let $m>0$. 
We introduce the numbers $r_m,\, r_m'\in (0,1]$
in the following way:
\begin{itemize} 
\item 
If $\Phi(1)\ge 2m$, then $r_m, r_m'$  are such numbers that $\Phi(r_m)=m/2$ and $\Phi(r_m')=2m$ holds; obviously $r_m,r_m'\in (0,1]$.
\smallskip

\item 
If $m/2<\Phi(1)< 2m$, then $r_m$ is defined as above, while $r_m':=1$.
\smallskip

\item
If $\Phi(1)\le m/2$, we set $r_m=r_m':=1$.
\end{itemize}
It is easy to see that
\begin{eqnarray}
\label{magnfield1} 
m-\Phi(r)\ge \frac{m}{2}\quad\text{as}\quad r\in(0, r_m),\\
\label{magnfield2}
\Phi(r)-m\ge m\quad\text{as}\quad r\in(r_m', 1).
\end{eqnarray}

In the following, $v$ be an arbitrary function from $C^\infty_0(0,1)$  
normalized by
\begin{gather}
\label{normalization}
\|v\|^2_{\L^2((0, 1), 2\pi r dr)}=2\pi \int_0^1 v(r) r dr =1.
\end{gather}
Note, that  \eqref{normalization} imply the following simple estimate: 
\begin{gather}
\label{r_m}
\int_{r_m}^{r_m'} 
\frac{1}{r}|v(r)|^2\,dr\le \frac{1}{2\pi r_m^2}.
\end{gather}

If $\Phi(1)\le m/2$, then   inequality (\ref{magnfield1}) holds
for all $r\in(0, 1)$. Consequently,
$$Q(h_m(B, \widetilde{V}))[v] \ge 2\pi\int_0^1  r\left(|v'(r)|^2+\frac{m^2}{4r^2} |v(r)|^2-\widetilde{V}(r)|v(r)|^2\right)\,dr,$$
whence, denoting $\gamma_{1}:=1/4$, we get
\begin{equation}
\label{negative0} 
h_m(B, \widetilde{V})\ge \gamma_{1}\left(-\frac{\mathrm{d}^2}{\mathrm{d}r^2}-\frac{1}{r}\frac{\mathrm{d}}{\mathrm{d}r}+\frac{m^2}{r^2}-\widetilde{V}/\gamma_{1}\right)\text{ as }m>0,\,\Phi(1)\le m/2.
\end{equation}

For $\Phi(1)>m/2$ (which implies, in particular, $r_m< r_m'$)  we consider separately two cases: 
\begin{gather}
\label{1case}
\text{\bf Case  I:\quad}2\pi\int_{r_m}^{r_m'} r|v(r)|^2\,dr\le\frac{1}{2}
\\
\label{2case}
\text{\bf Case II:\quad}2\pi\int_{r_m}^{r_m'} r|v(r)|^2\,dr>\frac{1}{2}.
\end{gather}
\smallskip

\noindent\textbf{Case I.} At first we note that, due to \eqref{normalization}, \eqref{1case} is equivalent to  
\begin{gather}
\label{normalization:equiv}
2\pi\int_{(0, r_m) \cup (r_m', 1)}r |v(r)|^2\,dr> \frac{1}{2}.
\end{gather}
Inequality \eqref{normalization:equiv} together with
(\ref{magnfield1}) yields
\begin{equation}\label{1} 
2\pi\int_{(0, r_m) \cup (r_m', 1)} \frac{(m-\Phi(r))^2}{r} |v(r)|^2\,dr\ge \frac{m^2\pi}{2}\int_{(0, r_m) \cup (r_m', 1)} r|v(r)|^2\,dr \ge\frac{m^2}{8}.
\end{equation}
Combining (\ref{1}) with (\ref{magnfield1})-(\ref{magnfield2}) we find
\begin{multline*} 
\frac{1}{2\pi} Q(h_m(B, \widetilde{V}))[v]\\\ge \int_0^1r\left(|v'(r)|^2-\widetilde{V}(r)|v(r)|^2\right)\,dr+\int_{(0, r_m)\cup(r_m', 1)}\frac{(m-\Phi(r))^2}{r}|v(r)|^2\,
dr\\\ge\int_0^1r\left(|v'(r)|^2-\widetilde{V}(r)|v(r)|^2\right)\,dr+\frac{1}{2}\int_{(0, r_m)\cup(r_m', 1)}\frac{m^2}{4r}|v(r)|^2\,dr+\frac{m^2}{32\pi}. \end{multline*}
The above bound together with (\ref{r_m}) implies 
\begin{multline*}
\frac{1}{2\pi} Q(h_m(B, \widetilde{V}))[v]\ge \int_0^1r\left(|v'(r)|^2-\widetilde{V}(r)|v(r)|^2\right)\,dr\\\nonumber +\frac{1}{8}\int_{(0, r_m)\cup (r_m', 1)}\frac{m^2}{r}|v(r)|^2\,dr+\frac{r_m^2}{ {16}}
\int_{r_m}^{r_m'} \frac{m^2}{r}|v(r)|^2\,dr
\\\ge  \frac{r_1^2}{{16}}\int_0^1r\left(|v'(r)|^2+\frac{m^2}{r^2}|v(r)|^2-\frac{{16}}{r_1^2}\widetilde{V}(r)|v(r)|^2\right)\,dr.
\end{multline*}
Then, denoting $\gamma_{2}:=r_1^2/16$, we arrive at
\begin{equation}
\label{negative1} \hspace{-3mm}
h_m(B, \widetilde{V})\ge \gamma_{2}\left(-\frac{\mathrm{d}^2}{\mathrm{d}r^2}-\frac{1}{r}\frac{\mathrm{d}}{\mathrm{d}r}+\frac{m^2}{r^2}-\widetilde{V}/\gamma_2\right)\text{ as }m>0,\,\Phi(1)> m/2,\,\eqref{1case}\text{ holds}.
\end{equation}

\noindent\textbf{Case II.} 
We set $\kappa:=\mathrm{min}\{r_m'-r_m, r_m/2\}$ and fix an arbitrary
$\mu\in(0,1) $ such that
\begin{gather}\label{choice}
\mu< \frac{(r_m-\kappa) \kappa}{8(r_m'-r_m+\kappa)}
\end{gather} 
(such a choice of constants will become clear later).
Again we have two possibilities:  
\begin{gather}
\label{kappa}
\textit{Case~IIa:\quad}2\pi\int_{r_m-\kappa}^{r_m} r|v(r)|^2\,dr > \mu
\\
\label{kappa1}
\textit{Case~IIb:\quad}2\pi\int_{r_m-\kappa}^{r_m} r|v(r)|^2\,dr \le \mu.
\end{gather}

\noindent\emph{Case~IIa.} 
It is easy to see that \eqref{kappa} implies
\begin{equation}\label{intermediate}
2\pi\int_{r_m-\kappa}^{r_m} \frac{|v(r)|^2}{r}\,dr\ge \frac{\mu}{r_m^2}.
\end{equation}
Repeating the similar calculations as in Case I and taking into account (\ref{magnfield1})-(\ref{magnfield2}) and (\ref{intermediate}) we obtain the following estimate:
\begin{eqnarray}\nonumber 
\frac{1}{2\pi} Q(h_m(B, \widetilde{V}))[v]\ge \int_0^1r\left(|v'(r)|^2-\widetilde{V}(r)|v(r)|^2\right)\,dr
\\
\nonumber+\int_{(0, r_m-\kappa)\cup(r_m', 1)}\frac{(m-\Phi(r))^2}{r}|v(r)|^2\,
dr+\int_{r_m-\kappa}^{r_m}\frac{(m-\Phi(r))^2}{r}|v(r)|^2\,
dr
\\
\nonumber\ge \int_0^1r\left(|v'(r)|^2-\widetilde{V}(r)|v(r)|^2\right)\,dr\\\nonumber+\int_{(0, r_m-\kappa)\cup(r_m', 1)}\frac{m^2}{4r}|v(r)|^2\,
dr+\int_{r_m-\kappa}^{r_m}\frac{m^2}{4r}|v(r)|^2\,
dr\\\nonumber
\ge \int_0^1r\left(|v'(r)|^2-\widetilde{V}(r)|v(r)|^2\right)\,dr+\int_{(0, r_m-\kappa)\cup(r_m', 1)}\frac{m^2}{4r}|v(r)|^2\,dr\\\label{eqn.kappa}+\frac{1}{2}\int_{r_m-\kappa}^{r_m}\frac{m^2}{4r}|v(r)|^2\,dr+\frac{m^2 \mu}{16\pi r_m^2}.
\end{eqnarray}
This together with (\ref{r_m}) gives
 \begin{eqnarray*}\frac{1}{2\pi} Q(h_m(B, \widetilde{V}))[v]\ge\\\nonumber\int_0^1r\left(|v'(r)|^2-\widetilde{V}(r)|v(r)|^2\right)\,dr+\int_{(0, r_m-\kappa)\cup(r_m', 1)}\frac{m^2}{4r}|v(r)|^2\,dr\\\nonumber +\frac{1}{2}\int_{r_m-\kappa}^{r_m}\frac{m^2}{4r}|v(r)|^2\,dr+\frac{\mu}{8}\int_{r_m}^{r_m'}\frac{m^2}{r}|v(r)|^2\,dr
\\\nonumber 
\ge {\mu\over 8} \int_0^1r\left(|v'(r)|^2+ \frac{m^2}{r^2}|v|^2-\frac{8\widetilde{V}(r)}{\mu}|v|^2\right)\,dr.
\end{eqnarray*}
The latter means
\begin{gather}
\label{negative2}
\begin{array}{r}
h_m(B, \widetilde{V})\ge \displaystyle\gamma_{3}\left(-\frac{\mathrm{d}^2}{\mathrm{d}r^2}-\frac{1}{r}\frac{\mathrm{d}}{\mathrm{d}r}+\frac{m^2}{r^2}-\widetilde{V}_3/\gamma_3\right)\\[1mm]
\text{as }m>0,\,\Phi(1)> m/2,\,\eqref{2case},\, \eqref{kappa}\text{ hold}.
\end{array}
\end{gather}
where
$
\gamma_{3}={\mu\over 8}.
$
\medskip

\noindent\emph{Case~IIb.} 
We need the following auxiliary lemma.

\begin{lemma}
\label{a}
Under  assumptions (\ref{assumption1})-(\ref{assumption2}) there exists a constant $\widetilde{C}=\widetilde{C}(B)$ such that the following inequality takes place
\begin{equation}
\label{lemma}
\Phi'(r)\ge \widetilde{C} \Phi^2(r).
\end{equation}
\end{lemma}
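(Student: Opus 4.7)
The plan is to use the fundamental theorem of calculus, $\Phi'(r)=rB(r)$, and then bound the ratio $\Phi'(r)/\Phi^2(r)$ from below separately on $(0,1/2]$ and on $[1/2,1)$, taking $\widetilde C$ to be the minimum of the two constants.

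On $(0,1/2]$ assumption \eqref{assumption2} gives $B(r)\le M\cdot 2^{\alpha}+\|g\|_{L^{\infty}}=:C_0$, while \eqref{assumption1} gives $B(r)\ge b_0>0$. Therefore $\Phi(r)\le C_0 r^2/2$ and $\Phi'(r)\ge b_0 r$, so
$$\frac{\Phi'(r)}{\Phi^2(r)}\ge \frac{4b_0}{C_0^{2}\,r^3}\ge \frac{32\,b_0}{C_0^{2}},$$
which is a uniform positive lower bound on $(0,1/2]$.

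On $[1/2,1)$ the idea is to exploit the sharp two-sided information coming from \eqref{assumption2}. Pick $r^{*}\in[1/2,1)$ so that $M/(1-r)^{\alpha}\ge 2\|g\|_{L^{\infty}}$ for $r\ge r^{*}$; then $B(r)\ge M/\bigl(2(1-r)^{\alpha}\bigr)$, whence
$$\Phi'(r)\ge \frac{M}{4(1-r)^{\alpha}}\qquad(r\ge r^{*}).$$
For the upper bound I integrate \eqref{assumption2}:
$$\Phi(r)\le M\int_{0}^{r}\frac{s\,ds}{(1-s)^{\alpha}}+\tfrac12\|g\|_{L^{\infty}},$$
and the elementary evaluation of the integral shows that this majorant is $O(1)$, $O(|\ln(1-r)|)$, $O\!\bigl((1-r)^{1-\alpha}\bigr)$, $O\!\bigl((1-r)^{-1}\bigr)$ in the four regimes $\alpha<1$, $\alpha=1$, $\alpha\in(1,2)$, $\alpha=2$. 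Squaring and combining with the lower bound on $\Phi'$, in every case one obtains
$$\frac{\Phi'(r)}{\Phi^{2}(r)}\ge c\,(1-r)^{\alpha-2}\qquad(r\ge r^{*})$$
for a positive $c=c(M,\alpha,\|g\|_{L^{\infty}})$; because $\alpha\le 2$ the exponent $\alpha-2$ is non-positive, so the right-hand side is bounded below by a positive constant on $[r^{*},1)$. On the remaining compact subinterval $[1/2,r^{*}]$ both $\Phi$ and $\Phi'$ are bounded and $\Phi'\ge b_0/2>0$, so the ratio is trivially bounded below.

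The only real obstacle is the bookkeeping across the four subranges of $\alpha$; the borderline case $\alpha=2$ is the tightest, since there the dominant terms in $\Phi(r)$ and $\sqrt{\Phi'(r)}$ grow at exactly the same rate $(1-r)^{-1}$, so the ratio tends to the finite positive limit $\sim 1/M$ rather than blowing up. This is what forces the upper restriction $\alpha\le 2$ in \eqref{assumption2}. Setting $\widetilde C=\widetilde C(B)$ equal to the minimum of the bounds obtained on $(0,1/2]$, $[1/2,r^{*}]$, and $[r^{*},1)$ completes the argument.
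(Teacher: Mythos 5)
Your proof is correct and follows essentially the same route as the paper's: bound $\Phi$ from above by integrating \eqref{assumption2} (the paper records exactly your four regimes as the asymptotic formulae \eqref{Phi1}), bound $\Phi'(r)=rB(r)$ from below near the boundary using the leading term $M(1-r)^{-\alpha}$, and handle the region away from $r=1$ via $\Phi(r)=O(r^2)$ together with $\inf B>0$. The only quibble is that for $\alpha=1$ your displayed bound $\Phi'/\Phi^{2}\ge c\,(1-r)^{\alpha-2}$ should carry an extra factor $|\ln(1-r)|^{-2}$; since the corrected quantity still tends to infinity as $r\to1$, the conclusion is unaffected.
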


\begin{proof}
Recall, that
\begin{gather}
\label{Phi-r}
\Phi'(r)=r B(r),
\end{gather}
where $B(r)$ is given by \eqref{assumption2}. 
One has the asymptotic formulae
\begin{gather}\label{Phi1}
\Phi(r)=
\begin{cases}\displaystyle
\frac{M}{(\alpha-1)}\frac{1}{(1-r)^{\alpha-1}}(1+{o}(1)),&\alpha>1,\\
\displaystyle M \ln(1-r) (1+{o}(1)),&\alpha=1,\\
\Phi(r)=\mathcal{O}(1),&\alpha<1
\end{cases}\text{ as }r\to 1.
\end{gather}
Taking into account that $\alpha$ is assumed to be smaller than or equal to $2$,
one easily obtains from \eqref{Phi-r} and \eqref{Phi1} the estimate \eqref{lemma} for the values of  $r$ being close to 1 (more precisely, for $r\in [r',1)$ with some $r'<1$).
Finally, for $r\in [0,r']$  one can estimate  $\Phi$ as follows:
$$(\Phi(r))^2\le\frac{ \|B\|^2_{\L^\infty(0, r')}}{4} r^4\leq
 r B(r)/\widetilde C,$$
where $\widetilde C=\displaystyle \frac{4\inf_{z\in\Omega} B(z) }{ \|B\|^2_{\L^\infty(0, r')}}$ (recall, that $\inf_{z\in\Omega} B(z)>0$ ). The lemma is proven.
\end{proof}

Let us return to the proof of the theorem. 
Recall, that we investigate \textit{Case~IIa}, which means that conditions (\ref{2case}) and (\ref{kappa1}) holds. 

In view of (\ref{kappa1}) one can choose a point $z\in (r_m-\kappa, r_m)$ such that
$$|v(z)| \le \left(\frac{\mu}{2\pi (r_m-\kappa)\kappa}\right)^{1/2}.$$
This inequality together with the fundamental theorem of calculus gives 
\begin{multline*}
\frac{1}{2}<2\pi\int_z^{r_m'} r|v(r)|^2\,dr= 2\pi\int_z^{r_m'} r\left|\int_z^r v'(t)\,\mathrm{d}t +v(z)\right|^2\,dr\\\le 4\pi\int_z^{r_m'} r(r-z) \int_z^r |v'(t)|^2\,dt\,dr +4\pi |v(z)|^2 (r_m'-z)
\\\le \frac{4\pi(r_m'-r_m+\kappa)^2}{z} \int_z^{r_m'} r |v'(r)|^2\,dr +\frac{2\mu (r_m'-r_m+\kappa)}{(r_m-\kappa) \kappa}.
\end{multline*}

Hence in view of (\ref{choice})
\begin{eqnarray}\label{derivative}
\int_z^{r_m'}r |v'(r)|^2\,dr\ge \frac{z}{16\pi(r_m'-r_m+\kappa)^2}\ge  \frac{z}{64\pi(r_m'-r_m)^2}.\end{eqnarray}
Using the mean value theorem 
$ \Phi(r_m')-\Phi(r_m)= \Phi'(r_m'')(r_m'- r_m)$,
where $r_m''$ is some point in $(r_m, r_m')$,
the monotonicity of $\Phi$ (it follows from \eqref{assumption1}), 
and Lemma (\ref{a}) we obtain
\begin{eqnarray}
\label{estimate} 
r_m'-r_m= \frac{\Phi(r_m')-\Phi(r_m)}{\Phi'(r_m'')}=\frac{3m}{2 \Phi'(r_m'')} =\frac{3\Phi(r_m)}{ \Phi'(r_m'')}\le \frac{3\Phi(r_m) }{ \widetilde{C} \Phi^2(r_m')}\le\frac{3\Phi(r_m) }{ \widetilde{C} \Phi^2(r_m)} \le \frac{6}{\widetilde{C} m}.
\end{eqnarray}
Finally, due to the choice of $\kappa$, one gets
\begin{gather}
\label{z}
z\ge r_m/2\ge r_1/2.
\end{gather}
Combining (\ref{derivative})-(\ref{z}) we conclude the existence of a constant $ C''=C''(B)>0$ such that
$$\int_z^{r_m'}r |v'(r)|^2\,dr\ge C''m^2.$$
This estimate together with (\ref{magnfield1})-(\ref{magnfield2}) and (\ref{r_m}) implies
\begin{eqnarray}
\nonumber\frac{1}{2\pi}Q(h_m(B, \widetilde{V}))[v]\ge \int_{(0, z)\cup (r_m^\prime, 1)}r|v'(r)|^2\,dr-\int_0^1r \widetilde{V}(r)|v(r)|^2\,dr\\\nonumber+  \frac{1}{2} \int_z^{r_m'}r|v'(r)|^2\,dr+ {\frac{1}{2}}C^{''} m^2\\\nonumber+\int_{(0, r_m)\cup(r_m', 1)}\frac{(m-\Phi(r))^2}{r}|v(r)|^2\,
dr+\int_{r_m}^{r_m'}\frac{(m-\Phi(r))^2}{r}|v(r)|^2\,
dr\\\nonumber
\ge \int_{(0, z)\cup (r_m', 1)}r|v'(r)|^2\,dr-\int_0^1 r\widetilde{V}(r)|v(r)|^2\,dr+ \frac{1}{2} \int_z^{r_m'}r|v'(r)|^2\,dr\\\nonumber +\int_{(0, r_m)\cup(r_m', 1)}\frac{m^2}{4r}|v(r)|^2\,
dr+  {C'' \pi r_m^2}\int_{r_m}^{r_m'}\frac{m^2}{r} |v|^2\,dr\\\nonumber\ge \gamma_{4}\int_0^1r\left(|v'(r)|^2+\frac{m^2}{r^2}|v(r)|^2- \widetilde{V}(r)/\gamma_4|v(r)|^2\right)\,dr,
\end{eqnarray}
where $\gamma_{4}:=\mathrm{min} \{1/4,\, {C'' \pi r_1^2}\}$. Thus
\begin{gather}
\label{negative final}
\begin{array}{r}
h_m(B, \widetilde{V})\ge \displaystyle\gamma_{4}\left(-\frac{\mathrm{d}^2}{\mathrm{d}r^2}-\frac{1}{r}\frac{\mathrm{d}}{\mathrm{d}r}+\frac{m^2}{r^2}-\widetilde{V}/\gamma_{4}\right)\\[1mm]
\text{as }m>0,\,\Phi(1)> m/2,\,\eqref{2case},\, \eqref{kappa1}\text{ hold}.
\end{array}
\end{gather} 
\smallskip

Combining inequalities (\ref{negative}), (\ref{negative0}), (\ref{negative1}),  (\ref{negative2}) and (\ref{negative final})  we obtain the desired estimate
$$
\forall m\in\mathbb{Z}:\quad h_m(B, \widetilde{V})\ge \gamma h_m(0, \widetilde{V}/\gamma),\quad\text{where}\quad
\gamma=\mathrm{min}\{\gamma_0,\gamma_{1},\gamma_{2},\gamma_{3},\gamma_{4}\}
$$
(note, that $\gamma$ depends only on $B$).
Consequently
\begin{gather}
\label{H:H}
H_\Omega(A,\widetilde V)\geq
\gamma
H_\Omega(0,\widetilde{V}/\gamma).
\end{gather} 
Using \eqref{H:H} and taking into account that the
spectrum of $H_\Omega(0,\widetilde{V}/\gamma)$ is purely discrete,
we conclude by the min-min principle that the spectrum of $H_\Omega(A,\widetilde{V}/\gamma)$ is also purely discrete, moreover
\begin{equation}\label{allsigma}
\forall\sigma\ge 0:\quad
\mathrm{tr}\left(H_\Omega(A, \widetilde{V})\right)_-^\sigma\le \gamma^\sigma \mathrm{tr}\left(H_\Omega(0,\widetilde{V}/\gamma)\right)_-^\sigma. 
\end{equation}
Finally, applying for $\sigma> 0$
the Lieb-Thirring bound \eqref{Lieb:Thirring} 
(recall, that in the two-dimensional case \eqref{Lieb:Thirring} holds only for positive $\sigma$) we obtain from \eqref{allsigma} the estimate 
$$\mathrm{tr}\left(H_\Omega(A, \widetilde{V})\right)_-^\sigma \le \gamma^\sigma L_{\sigma,2} \int_\Omega \left({\frac{\widetilde{V}(z)}{\gamma}}\right)^{\sigma+1}\,d z = \frac{L_{\sigma,2}}{\gamma}  \int_0^1 r \widetilde{V}^{\sigma+1}(r)\,dr,$$
where $ L_{\sigma,2}$ is a constant from \eqref{Lieb:Thirring}.  Thus Theorem~\ref{th1} is proven. Similarly, Theorem~\ref{th2} follows from 
\eqref{allsigma} (with $\sigma=0$) and  the Chadan-Khuri-Martin-Wu estimate
\eqref{Chadan}.

\subsection*{Acknowledgements}
The work of D.B. is supported by the Czech Science Foundation (GACR) within the project 17-01706S.
The work of B.S. is supported by the research project "Numbers, Geometry and Physics".

\end{document}